\documentclass[a4paper,12pt]{amsart}
\usepackage{amsmath, amssymb, amsthm}
\usepackage{verbatim}
\usepackage{hyperref}

\newcounter{theorem}
\newtheorem{thm}[theorem]{Theorem}
\newtheorem{lemma}[theorem]{Lemma}

\newtheorem{cor}[theorem]{Corollary}
\newtheorem{defn}[theorem]{Definition}
\newtheorem{notation}[theorem]{Notation}

\theoremstyle{remark}
\newtheorem*{remark*}{Remark}

\numberwithin{equation}{section}
\numberwithin{theorem}{section}

\newcommand{\e}{\epsilon}

\renewcommand{\setminus}{\backslash}
\newcommand{\tens}{\otimes}

\newcommand{\dunion}{\amalg}

\renewcommand{\emptyset}{\varnothing}
\newcommand{\iso}{\cong}

\newcommand{\F}{\mathbb{F}}
\newcommand{\Q}{\mathbb{Q}}
\newcommand{\R}{\mathbb{R}}
\newcommand{\Z}{\mathbb{Z}}
\newcommand{\cS}{\mathcal{S}}

\newcommand{\ccite}[2]{\cite[#1]{#2}}
\newcommand{\alabel}{\label}

\newcommand{\labelledthing}[2]{\hspace{4pt}\buildrel {#2} \over #1 \hspace{3pt}} 

\newcommand{\labelledrightarrow}{\labelledthing{\longrightarrow}}

\begin{document}

\title[Ordered vector spaces with interpolation]{Finite dimensional ordered vector spaces with Riesz interpolation and Effros-Shen's unimodularity conjecture}

\author{Aaron Tikuisis}
\address{Aaron Tikuisis \\ \url{www.math.uni-muenster.de/u/aaron.tikuisis}}

\keywords{Dimension groups, Riesz interpolation, ordered vector spaces, unimodularity conjecture, simplicial groups, lattice-ordered groups}
\subjclass[2010]{46A40, 06B75, 06F20}

\maketitle

\begin{abstract}
It is shown that, for any field $\F \subseteq \R$, any ordered vector space structure of $\F^n$ with Riesz interpolation is given by an inductive limit of sequence with finite stages $(\F^n,\F_{\geq 0}^n)$ (where $n$ does not change).
This relates to a conjecture of Effros and Shen, since disproven, which is given by the same statement, except with $\F$ replaced by the integers, $\Z$.
Indeed, it shows that although Effros and Shen's conjecture is false, it is true after tensoring with $\Q$.
\end{abstract}

\section{Introduction}
In this article we prove the following.

\begin{thm}
\alabel{MainResult}
Let $\F$ be a subfield of the real numbers, let $n$ be a natural number, and suppose that $(V,V^+)$ is a ordered directed $n$-dimensional vector space over $\F$ with Riesz interpolation.
Then there exists an inductive system
\[ (\F^n, \F_{\geq 0}^n) \labelledrightarrow{\phi_i^2} (\F^n, \F_{\geq 0}^n) \labelledrightarrow{\phi_2^3} \cdots \]
of ordered vector spaces over $\F$ whose inductive limit in $(V,V^+)$.
\end{thm}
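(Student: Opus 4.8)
The plan is to reformulate the conclusion as a statement about exhausting $V^+$ by simplicial cones, to control that cone via Choquet simplex theory, and then to build the inductive system by repeatedly enlarging a simplicial cone inside $V^+$. After fixing an $\F$-linear isomorphism $V\cong\F^n$, the data is a cone $V^+\subseteq\F^n$ that is proper (the order is antisymmetric), generating (the order is directed) and has Riesz interpolation. Call a cone of the form $g(\F_{\geq0}^n)$ with $g\in GL_n(\F)$ — equivalently $\sum_{i=1}^n\F_{\geq0}b_i$ for an $\F$-basis $b_1,\dots,b_n$ — an \emph{$\F$-simplicial cone}. It suffices to produce an increasing sequence $C_1\subseteq C_2\subseteq\cdots$ of $\F$-simplicial cones with $C_k\subseteq V^+$ and $\bigcup_k C_k=V^+$: writing $C_k=g_k(\F_{\geq0}^n)$, the maps $\phi_k^{k+1}:=g_{k+1}^{-1}\circ g_k$ are positive as maps $(\F^n,\F_{\geq0}^n)\to(\F^n,\F_{\geq0}^n)$, since $\phi_k^{k+1}(\F_{\geq0}^n)=g_{k+1}^{-1}(C_k)\subseteq g_{k+1}^{-1}(C_{k+1})=\F_{\geq0}^n$, and the inductive limit of the resulting system, with limit maps $g_k$, has underlying space $\F^n$ and positive cone $\bigcup_k g_k(\F_{\geq0}^n)=V^+$. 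An initial cone $C_1$ exists because $V^+$, being generating, contains an $\F$-basis of $V$.

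The structural input concerns $\overline{V^+}$, the closure of $V^+$ in $\R^n=V\otimes_\F\R$; it is a closed generating convex cone, and $V^+$ is dense in it. Since $(V,V^+)$ is a directed interpolation vector space it has an order unit, so by the theory of interpolation groups its state space $K$ is a Choquet simplex, hence — being finite-dimensional — an ordinary simplex. As $\overline{V^+}$ is closed it equals $\{x\in\R^n:f(x)\geq0\text{ for all }f\in K\}$; combined with $K$ being a simplex, this forces $\overline{V^+}$ to be a simplicial cone modulo its lineality space $L$: there are linearly independent $\ell_1,\dots,\ell_m\in(\R^n)^*$ (with $m=n-\dim L$) such that $\overline{V^+}=\{x:\ell_i(x)\geq0\text{ for all }i\}$, equivalently $\overline{V^+}=L\oplus\tilde C$ for a pointed simplicial cone $\tilde C$ in a complement of $L$. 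A companion fact, also extracted from interpolation and directedness, is that $\operatorname{int}(\overline{V^+})\cap\F^n\subseteq V^+$; so $V^+$ can differ from the $\F$-points of $\overline{V^+}$ only along the boundary facets, and on each facet the order ideal it cuts out is an interpolation vector space of strictly smaller dimension — the picture recurs downward, which bounds the complexity of $V^+$ while still leaving room to manoeuvre inside it.

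The heart of the proof is an \textbf{enlargement lemma}: given an $\F$-simplicial cone $C=\sum_{i=1}^n\F_{\geq0}b_i\subseteq V^+$ with $C-C=V$ and an arbitrary $v\in V^+$, there is an $\F$-simplicial cone $C'$ with $C\subseteq C'\subseteq V^+$, $C'-C'=V$ and $v\in C'$. Writing $v=\sum_i\lambda_ib_i$, the naive move of adjoining $v$ to the $b_i$ produces a cone with $n+1$ extreme rays which is generally not simplicial; instead one uses Riesz decomposition to split $v$, and, dually, to split an upper bound for $\{b_1,\dots,b_n,v\}$ furnished by directedness, and then reassembles from the pieces a single $\F$-basis of $V$ lying in $V^+$ through which both $C$ and $v$ factor positively — the simplicial-modulo-lineality structure of $\overline{V^+}$ from the previous paragraph is what guarantees enough room. \emph{I expect this lemma to be the main obstacle.} It is also where the vector-space hypothesis is essential: divisibility by arbitrary nonzero scalars removes the dimension growth inherent to the Effros--Handelman--Shen construction over $\Z$, which is why the analogous statement fails for dimension groups on $\Z^n$ but holds here.

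Finally, assembly. If $\F$ is countable, enumerate $V^+=\{v_1,v_2,\dots\}$, take $C_1$ as above and let $C_{k+1}$ be the output of the enlargement lemma applied to $C_k$ and $v_k$; then $\bigcup_kC_k=V^+$. For arbitrary $\F$ one first shows $V^+$ is covered by \emph{countably many} $\F$-simplicial cones: by the companion fact, $\operatorname{int}(\overline{V^+})\cap\F^n$ lies in the union of the countably many simplicial cones spanned by $n$-element subsets of a fixed countable subset of $\F^n$ dense in $\overline{V^+}$, while $V^+\cap\partial\overline{V^+}$ splits over the finitely many facets of $\overline{V^+}$, each handled by induction on dimension and then thickened to an $n$-dimensional $\F$-simplicial cone inside $V^+$ (again using that interior $\F$-points of $\overline{V^+}$ are positive). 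Given a countable cover $V^+=\bigcup_\alpha D_\alpha$, merge it into an increasing chain: with $C_1$ as above and $C_k$ built, apply the enlargement lemma finitely often to absorb the (at most $n$) generators of $D_k$ into an $\F$-simplicial cone $C_{k+1}\supseteq C_k$ in $V^+$; then $\bigcup_kC_k=\bigcup_\alpha D_\alpha=V^+$, and the reduction of the first paragraph yields the required inductive system.
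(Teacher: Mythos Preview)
Your reformulation in the first paragraph is correct and matches the paper's own reduction: the theorem is equivalent to exhibiting an increasing sequence of $\F$-simplicial cones $C_k=\phi_k(\F_{\geq 0}^n)$ exhausting $V^+$. From there, however, the proposal diverges entirely from the paper and, more importantly, does not close. The whole weight of the argument rests on your enlargement lemma, and you have not proved it---you say so yourself. The sketch you give (``use Riesz decomposition to split $v$, dually split an upper bound, reassemble into a basis, with the simplicial-modulo-lineality structure of $\overline{V^+}$ providing room'') is not a proof; it is a list of ingredients with no recipe. Riesz decomposition in the style of Effros--Handelman--Shen produces simplicial cones of growing rank, and the step that collapses $n+1$ generators back down to an $\F$-basis \emph{inside $V^+$} is exactly the content of the theorem. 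Nothing you have written explains why the lineality/simplex description of $\overline{V^+}$ makes that collapse possible; the boundary of $\overline{V^+}$ can meet $V^+$ in a complicated way (the paper's combinatorial description shows exactly how), and the interaction between facets is where the difficulty lives. The ancillary claims---existence of an order unit, the state space being a finite simplex, $\operatorname{int}(\overline{V^+})\cap\F^n\subseteq V^+$, the countable-cover reduction for uncountable $\F$---are all plausible and some are standard, but they are scaffolding around a hole.

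For contrast, the paper takes a completely different and much more explicit route. It invokes the classification of finite-rank Riesz interpolation groups from \cite{FinRiesz}, which presents $V^+$ combinatorially as a union $\bigcup_{S\in\cS}0^{E^0_S}\F_{>0}^{E^>_S}\F^{E^*_S}$ indexed by a lattice $\cS\subseteq 2^{\{1,\dots,n\}}$. From this data it writes down, by hand, two one-parameter families of invertible linear maps $\alpha^\e,\beta^R\colon\F^n\to\F^n$ (essentially perturbations of the identity tailored to the combinatorics of $\cS$), proves directly that $\beta^R\circ\alpha^\e(\F_{\geq 0}^n)\subseteq V^+$ for all $\e,R$, and that these images exhaust $V^+$ as $\e\to 0,\ R\to\infty$; a short nesting argument then extracts an increasing subsequence. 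No enlargement lemma, no Choquet theory, no induction on facets---the classification does all the structural work, and the rest is linear algebra. If you want to pursue your approach, the enlargement lemma needs an actual proof, and you will almost certainly find yourself re-deriving something equivalent to the combinatorial description the paper already imports.
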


The inductive limit may be taken either in the category of ordered abelian groups (with positivity-preserving homomorphisms as the arrows) or of ordered vector spaces over $\F$ (with positivity-preserving linear transformations as the arrows).
Here, $\F_{\geq 0} := \F \cap [0,\infty)$, so that the ordering on $(\F^n, \F_{\geq 0}^n)$ is simply given by coordinatewise comparison.

In \cite{EffrosShen:DimGroups}, Effros and Shen conjectured that every ordered, directed, unperforated, rank $n$ free abelian group $(G,G^+)$ with Riesz interpolation can be realized as an inductive system of ordered groups $(\Z^n, \Z_{\geq 0}^n)$. 
This was called the ``unimodularity conjecture,'' as the connecting maps would necessarily (eventually) be unimodular.
This conjecture was disproven by Riedel in \cite{Riedel}.
Theorem \ref{MainResult} shows that, nonetheless, upon tensoring with the rational numbers (or any other field contained in $\R$), the conjecture is true.
As a consequence, Corollary \ref{QSimplicial} says that if $(G,G^+)$ is an ordered $n$-dimensional $\Q$-vector space with Riesz interpolation then it is an inductive limit of $(\Z^n,\Z_{\geq 0}^n)$ (where the maps are, of course, not unimodular).

In \cite{Handelman:RealDG}, David Handelman shows that every vector space with Riesz interpolation can be realized as an inductive limit of ordered vector spaces $(\F^n,\F_{\geq 0}^n)$, though of course the number $n$ isn't assumed to be constant among the finite stages.
The focus of \cite{Handelman:RealDG} is on the infinite-dimensional case, and indeed, an interesting example is given of countable dimensional ordered vector space that can't be expressed as an inductive limit of a \textit{sequence} of ordered vector spaces $(\F^n,\F_{\geq 0}^n)$.
Combined with this article, this gives a dichotomy between the behaviour of infinite- versus finite-dimensional ordered vector spaces with Riesz interpolation.

\section{Preliminaries}
\alabel{PrelimSec}
We shall say a little here about the theory of ordered vector spaces with Riesz interpolation.
Although the focus in on vector spaces, much of the interesting theory holds in the more general setting of ordered abelian groups (particularly when the group is unperforated, as ordered vector spaces are automatically).
An excellent account of this theory can be found in the book \cite{Goodearl:book} by Ken Goodearl.

\begin{defn}
An \textbf{ordered vector space} consists of a vector space $V$ together with a subset $V^+ \subseteq V$ called the positive cone, giving an ordering compatible with the vector space structure; that is to say:
\begin{enumerate}
\item[(\textbf{OV1})] $V^+ \cap (-V^+) = 0$ ($V^+$ gives an order, not just a preorder);
\item[(\textbf{OV2})] $V^+ + V^+ \subseteq V^+$; and
\item[(\textbf{OV3})] $\lambda V^+ \subseteq V^+$ for all $\lambda \in \F_{\geq 0}$.
\end{enumerate}
The ordering on $V$ is of course given by $x \leq y$ if $y-x \in V^+$.

The ordered vector space $(V,V^+)$ is \textbf{directed} if for all $x,y \in V$, there exists $z \in V$ such that
\[ \begin{array}{c} x \\ y \end{array} \leq z. \]

The ordered vector space $(V,V^+)$ has \textbf{Riesz interpolation} if for any $a_1,a_2,c_1,c_2 \in V$ such that
\[ \begin{array}{c} a_1 \\ a_2 \end{array} \leq  \begin{array}{c} c_1 \\ c_2 \end{array}, \]
there exists $b \in V$ such that
\[ \begin{array}{c} a_1 \\ a_2 \end{array} \leq b \leq \begin{array}{c} c_1 \\ c_2 \end{array}. \]
\end{defn}

Note that $(V,V^+)$ being directed is an extremely natural condition, as it is equivalent to saying that $V^+ - V^+ = V$.
Riesz interpolation for an ordered vector space $(V,V^+)$ is equivalent to Riesz decomposition, which says that for any $x_1,x_2,y \in V^+$, if $y \leq x_1 + x_2$ then there exist $y_1,y_2 \in V^+$ such that $y = y_1+y_2$ and $y_i \leq x_i$ for $i=1,2$ \ccite{Section 23}{Birkhoff:LatticeGroups}.

The category of ordered vector spaces (over a fixed field $\F$) has as arrows linear transformations which are positivity-preserving, meaning that they map the positive cone of the domain into the positive cone of the codomain.
This category admits inductive limits, and for an inductive system $((V_\alpha, V_\alpha^+)_{\alpha \in A},(\phi_\alpha^\beta)_{\alpha \leq \beta})$, the inductive limit
is given concretely as $(V,V^+)$ where $V$ is the inductive limit of $((V_\alpha)_{\alpha \in A},(\phi_\alpha^\beta)_{\alpha \leq \beta})$
in the category of vector spaces, and if $\phi_\alpha^\infty:V_\alpha \to V$ denotes the canonical map then
\[ V^+ = \bigcup_{\alpha \in A} \phi_\alpha^\infty(V_\alpha). \]
If $(V_\alpha,V_\alpha^+)$ has Riesz interpolation for every $\alpha$ then so does the inductive limit $(V,V^+)$.

Theorem 1 of \cite{Handelman:RealDG} states that every ordered $\F$-vector space with Riesz interpolation can be realized as an inductive limit of a net of ordered vector spaces of the form $(\F^n,\F^n_{\geq 0})$.
The proof uses the techniques of \cite{EffrosHandelmanShen}, where it was shown that every ordered directed unperforated abelian group with Riesz interpolation is an inductive limit of a net of ordered groups of the form $(\Z, \Z_{\geq 0})$.
In the case that $\F=\Q$, \ccite{Theorem 1}{Handelman:RealDG} follows from \cite{EffrosHandelmanShen} and the theory of ordered group tensor product found in \cite{GoodearlHandelman:tens}.
Certainly, if $(V,V^+)$ is an ordered directed $\Q$-vector space with Riesz interpolation then it can be written as an inductive limit of $G_\alpha = (\Z^{n_\alpha}, \Z_{\geq 0}^{n_\alpha})$, and then we have
\[ (V,V^+) \iso (\Q,\Q^+) \tens_{\Z} (V,V^+) \iso \lim (\Q, \Q_{\geq 0}) \tens_{\Z} (\Z^{n_\alpha}, \Z_{\geq 0}^{n_\alpha}) \iso \lim (\Q^\alpha, \Q_{\geq 0}^{n_\alpha}). \]
But in the case of other fields, we no longer have $(V,V^+) \iso (V,V^+) \tens_{\Z} (\F,\F_{\geq 0})$ (indeed, $\F \tens_{\Z} \F \not\iso \F$).
Indeed, although in the countable case, the net of groups in \cite{EffrosHandelmanShen} can be chosen to be a sequence, not every countable dimensional ordered vector spaces with Riesz interpolation is the limit of a sequence of ordered vector spaces $(\F^n,\F_{\geq 0}^n)$.
Theorem 2 of \cite{Handelman:RealDG} characterizes when the net from \ccite{Theorem 1}{Handelman:RealDG} can be chosen to be a sequence: exactly when the positive cone is countably generated.

Using \cite{EffrosHandelmanShen}, one sees that an obviously sufficient condition for $(V,V^+)$ to be the limit of a sequence of ordered vector spaces of the form $(\F^n,\F_{\geq 0}^n)$ is that 
\begin{equation}
\alabel{DGtens}
(V,V^+) \iso (G,G^+) \tens_{\Z} (\F,\F_{\geq 0}).
\end{equation}
This is the case whenever $\F=\Q$.
\ccite{Proposition 5}{Handelman:RealDG} also shows that \eqref{DGtens} holds when $(V,V^+)$ is simple, since in this case, we can in fact take $(G,G^+)$ to be a rational vector space.
Also, \eqref{DGtens} holds in the finite rank case, as \ccite{Theorem 3.2 and Corollary 6.2}{FinRiesz} likewise show that we can take $(G,G^+)$ to be a rational vector space.
However, Theorem \ref{MainResult} improves on this result in the finite rank case, by showing that the finite stages have an even more special form -- their dimension does not exceed the dimension of the limit.

\section{Outline of the proof}
In light of the concrete description above of the inductive limit of ordered vector spaces, saying that $(V,V^+)$ (where $\dim_\F V = n$) can be realized as an inductive limit of a system
\[ (\F^n, \F_{\geq 0}^n) \labelledrightarrow{\phi_i^2} (\F^n, \F_{\geq 0}^n) \labelledrightarrow{\phi_2^3} \cdots \]
is equivalent to saying that there exist linear transformations $\phi_i^\infty:\F^n \to V$ such that:
\begin{enumerate}
\item $\phi_i^\infty$ is invertible for all $i$;
\item $V^+ = \bigcup \phi_i^\infty(\F_{\geq 0}^n)$; and
\item For all $i$, $\phi_i^\infty(\F_{\geq 0}^n) \subseteq \phi_{i+1}^\infty(\F_{\geq 0}^n)$.
\end{enumerate}
This idea is used in the proof of Theorem \ref{MainResult}, which we outline now.

We rely on \cite{FinRiesz} for a combinatorial description of the ordered vector space $(V,V^+)$.
Using this description, linear tranformations $\alpha^\e,\beta^R:\F^n \to \F^n$ are defined for all $\e,R \in \F_{>0} := \F \cap (0,\infty)$.
It is shown in Lemma \ref{Invertibility} that both $\alpha^\e$ and $\beta^R$ are invertible.
In \eqref{Udefn}, we associate to $(V,V^+)$ another ordered vector space $(\F^n,U^+)$ whose cone is like $V^+$ but such that the positive functionals on $(\F^n,U^+)$ separate the points.
We show in Lemma \ref{ForwardImagePositive} (i) and Lemma \ref{UnionIsAll} (i), that
\[ U^+ = \bigcup_{R \in \F_{>0}} \alpha^\e(\F_{\geq 0}^n), \]
and in Lemma \ref{ForwardImagePositive} (ii) and Lemma \ref{UnionIsAll} (ii), that
\[ V^+ = \bigcup_{\e \in \F_{>0}} \beta^R(U^+). \]
Although we don't have 
\[ \beta^{R_1}(\alpha^{\e_1}(\F_{\geq 0}^n)) \subseteq \beta^{R_2}(\alpha^{\e_2}(\F_{\geq 0}^n)) \]
 when $R_1 < R_2$ and $\e_1 > \e_2$, Lemma \ref{EpsilonExists} does allow us to extract an increasing sequence from among all the images $\beta^R(\alpha^\e(\F_{\geq 0}))$, such that their union is still all of $V^+$.

\section{The proof in detail}
We begin with a useful matrix inversion formula.

\begin{lemma}
\alabel{Jinverse}
Let $J_n \in M_n$ denote the matrix all of whose entries are $1$.
Then for $\lambda \neq -1/n$, $I_n+\lambda J_n$ is invertible and
\[ (I_n+\lambda J_n)^{-1} = I_n - \frac{\lambda}{\lambda n + 1} J_n. \]
\end{lemma}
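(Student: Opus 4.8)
The plan is to verify the formula by a direct multiplication, exploiting the one structural identity satisfied by $J_n$, namely $J_n^2 = n J_n$: the $(i,j)$ entry of $J_n^2$ equals $\sum_{k=1}^n 1 = n$, so $J_n^2 = nJ_n$. Since the hypothesis $\lambda \neq -1/n$ is exactly the condition $\lambda n + 1 \neq 0$, the scalar $\mu := \frac{\lambda}{\lambda n + 1}$ is well-defined, and the claimed inverse is $I_n - \mu J_n$.

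First I would expand, using $J_n^2 = n J_n$,
\[ (I_n + \lambda J_n)(I_n - \mu J_n) = I_n + (\lambda - \mu)J_n - \lambda \mu J_n^2 = I_n + \bigl(\lambda - \mu - \lambda \mu n\bigr) J_n. \]
Then I would observe that $\lambda - \mu - \lambda\mu n = \lambda - \mu(1 + \lambda n) = \lambda - \lambda = 0$ by the choice of $\mu$, so the product is $I_n$. Because these are square matrices, a one-sided inverse is automatically two-sided, so $I_n + \lambda J_n$ is invertible with the stated inverse; alternatively one checks the product in the other order in the same way.

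There is essentially no obstacle here; the only points requiring any care are the elementary identity $J_n^2 = nJ_n$ and the observation that the hypothesis $\lambda \neq -1/n$ is precisely what makes $\mu$ well-defined. (One could instead deduce the formula from the Sherman--Morrison identity applied to the rank-one matrix $J_n = \mathbf{1}\mathbf{1}^{T}$, with $\mathbf{1}$ the all-ones column vector, since then $\mathbf{1}^{T}(\lambda \mathbf{1}) = \lambda n$; but the direct verification above is self-contained and shorter.)
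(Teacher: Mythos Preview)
Your proof is correct and follows exactly the same approach as the paper: both verify the product $(I_n+\lambda J_n)(I_n-\frac{\lambda}{\lambda n+1}J_n)=I_n$ directly using the identity $J_n^2=nJ_n$. You simply spell out the computation in more detail than the paper does.
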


\begin{proof}
Using the fact that $J^2 = nJ$, we can easily verify
\[ (I + \lambda J) \left(I - \frac{\lambda}{\lambda n + 1} J \right) = I. \]
\end{proof}

The main result of \cite{FinRiesz} shows that every finite dimensional ordered directed $\F$-vector space with Riesz interpolation looks like $\F^n$ with an positive cone given by unions of products of $\F,\F_{>0}$ and $\{0\}$.
To fully describe the result, the following notation for such products is quite useful.

\begin{notation}
For a partition $\{1,\dots,n\} = S_1 \dunion \cdots \dunion S_k$ and subsets $A_1,\dots,A_k$ of a set $A$, define
\[ A_1^{S_1} \cdots A_k^{S_k} = \{(a_1,\dots,a_n) \in A^n: a_i \in A_j \ \forall i \in S_j, j=1,\dots,k\}. \]
\end{notation}

\begin{thm} 
\alabel{CombDescr}
Every finite dimensional ordered directed $\F$-vector space with Riesz interpolation is isomorphic to $(\F^n,V^+)$ where
\[ V^+ = \bigcup_{S \in \cS} 0^{E^0_S}\, \F_{>0}^{E^>_{S}}\, \F^{E^*_{S}}, \]
$\cS$ is a sublattice of $2^{\{1,\dots,n\}}$ containing $\emptyset$ and $\{1,\dots,n\}$, and for each $S \in \cS$, we have a partition
\[ \{1,\dots,n\} = E^0_S \dunion E^>_S \dunion E^*_S, \]
where $E^0_S = S^c$.
We also use the notation $E^\geq_S := E^0_S \dunion E^>_S$.
The sets $E^0_S, E^>_S, E^*_S$ satisfy the following conditions:
\begin{enumerate}
\item[(\textbf{RV1})] \alabel{UnionFormula}
Using the notation $E^\geq_S := E^>_S \dunion S^c$, for any $S_1,S_2 \in \cS$,
\[ E^\geq_{S_1 \cup S_2} = E^\geq_{S_1} \cap E^\geq_{S_2}; \text{ and} \]
\item[(\textbf{RV2})] \alabel{PositiveIdealSeparation}
For any $S_1,S_2 \in \cS$, if $S_2 \not\subseteq S_1$ then $E^>_{S_2} \setminus S_1 \neq \emptyset$.
\end{enumerate}
\end{thm}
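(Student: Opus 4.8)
The plan is to appeal directly to the structure theory developed in \cite{FinRiesz} and translate its output into the language of the sets $E^0_S, E^>_S, E^*_S$. First I would recall the main classification result of \cite{FinRiesz}: a finite dimensional ordered directed $\F$-vector space with Riesz interpolation is determined by its face structure, which is encoded combinatorially by a finite distributive lattice together with the compatibility data of how the relatively open faces sit inside one another. Concretely, one identifies $V$ with $\F^n$ by choosing coordinates adapted to the extreme rays (or, dually, to a separating family of states together with the infinitesimal directions), so that the positive cone becomes a finite union of ``orthant-like'' pieces, each piece being a product over the coordinates of one of the three sets $\{0\}$, $\F_{>0}$, $\F$. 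The indexing set $\cS$ of this union is then read off as the lattice of faces, with $\emptyset$ corresponding to the zero face and $\{1,\dots,n\}$ to the whole cone; for a face indexed by $S$, the complement $S^c = E^0_S$ records the coordinates that vanish identically on that face, $E^>_S$ records the coordinates that are strictly positive there, and $E^*_S$ records the coordinates that are unconstrained (the ``infinitesimal'' or tangential directions).

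The next step is to verify that $\cS$ is a sublattice of $2^{\{1,\dots,n\}}$ containing $\emptyset$ and $\{1,\dots,n\}$, and that the partition $\{1,\dots,n\} = E^0_S \dunion E^>_S \dunion E^*_S$ with $E^0_S = S^c$ is well-defined for each $S \in \cS$; this is essentially a restatement of the fact that the faces of a Riesz cone form a distributive lattice closed under the natural join and meet. I would then establish (RV1): the identity $E^\geq_{S_1 \cup S_2} = E^\geq_{S_1} \cap E^\geq_{S_2}$ says that the set of coordinates which are nonnegative on the face joining $S_1$ and $S_2$ is exactly the intersection of the corresponding sets for $S_1$ and $S_2$ — this follows because a coordinate functional is nonnegative on the smallest face containing two faces precisely when it is nonnegative on each of them, using linearity and the fact that the join face is the set of sums. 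Finally, (RV2) — for $S_2 \not\subseteq S_1$, the set $E^>_{S_2} \setminus S_1$ is nonempty — is the separation property guaranteeing that distinct faces are genuinely distinguished by the strict-positivity pattern; it encodes that if every strictly-positive coordinate of the face $S_2$ were already contained in $S_1$, then $S_2$ would be ``visible inside'' $S_1$, contradicting $S_2 \not\subseteq S_1$. This last point is where one must be careful to use the full strength of Riesz interpolation rather than mere directedness.

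The main obstacle I anticipate is not any single hard estimate but rather the bookkeeping of the translation: \cite{FinRiesz} presumably phrases its classification in terms of a somewhat different combinatorial gadget (a poset of faces with multiplicities, or a system of positive functionals and their kernels), and the work lies in checking that the sets $E^0_S, E^>_S, E^*_S$ extracted from that data satisfy exactly (RV1) and (RV2) and that, conversely, $V^+$ is recovered as the stated union $\bigcup_{S \in \cS} 0^{E^0_S} \F_{>0}^{E^>_S} \F^{E^*_S}$. In particular one must check that the union is indeed a cone — closed under addition — which amounts to verifying that for $S_1, S_2 \in \cS$ the sum of a point in the $S_1$-piece and a point in the $S_2$-piece lands in the $(S_1 \cup S_2)$-piece; here (RV1) is exactly what makes this work, since the sum has a coordinate equal to zero only where both summands do, i.e. on $E^0_{S_1} \cap E^0_{S_2} = (S_1 \cup S_2)^c$, and is strictly positive on the remaining coordinates of $E^\geq_{S_1} \cap E^\geq_{S_2} = E^\geq_{S_1 \cup S_2}$. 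Once the dictionary is set up, the verification of (OV1)--(OV3) and of Riesz interpolation for the stated cone is routine and can be left to \cite{FinRiesz} or dispatched quickly.
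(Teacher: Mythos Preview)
Your core instinct is right: this theorem is a citation to \cite{FinRiesz}, not something to be re-proved from scratch. However, you have substantially overestimated the amount of translation required. Theorem~3.2 of \cite{FinRiesz} is already stated in essentially the same notation --- the sublattice $\cS$, the partitions $E^0_S \amalg E^>_S \amalg E^*_S$, and the description of $V^+$ as the union $\bigcup_{S \in \cS} 0^{E^0_S}\,\F_{>0}^{E^>_S}\,\F^{E^*_S}$ are all taken verbatim from there. So the ``bookkeeping of the translation'' you anticipate as the main obstacle does not exist, and your paragraphs about verifying that $\cS$ is a lattice, that the union is closed under addition, and that (\textbf{OV1})--(\textbf{OV3}) and Riesz interpolation hold are all unnecessary: these are the content of \cite{FinRiesz}, not something to be redone here.

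The one genuine point you miss is the only thing the paper's proof actually does: condition (\textbf{RV2}) as stated here (for $S_2 \not\subseteq S_1$) is \emph{not} literally the condition appearing in \cite[Theorem~3.2]{FinRiesz}, which instead assumes $S_1 \subsetneq S_2$. The paper's proof consists of the one-line observation that these two formulations are equivalent: if $S_2 \not\subseteq S_1$ then $S_1 \cap S_2 \subsetneq S_2$ (and $S_1 \cap S_2 \in \cS$ since $\cS$ is a sublattice), so the \cite{FinRiesz} version applies with $S_1$ replaced by $S_1 \cap S_2$ and gives $E^>_{S_2} \setminus (S_1 \cap S_2) \neq \emptyset$, hence $E^>_{S_2} \setminus S_1 \neq \emptyset$ since $E^>_{S_2} \subseteq S_2$; the converse direction is immediate. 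Your proof plan would be improved by dropping the speculative translation work and adding this short equivalence check.
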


\begin{remark*}
Corollaries 5.2 and 6.2 of \cite{FinRiesz} says that, in the cases $\F=\R$ and $\F=\Q$, every $V^+$ given as in the above theorem does actually have Riesz interpolation.
Moreover, the proof of \ccite{Corollary 5.2}{FinRiesz} works for any other field $\F \subseteq \R$.
However, the proof of Theorem \ref{MainResult} only uses that $V^+$ has the form described in the above theorem, and therefore it gives an entirely different proof of \ccite{Corollary 5.2}{FinRiesz}, that $V^+$ has Riesz interpolation (since Riesz interpolation is preserved under taking inductive limits).
\end{remark*}

\begin{proof}
This is simply a special case of \ccite{Theorem 3.2}{FinRiesz}.
Note that (\textbf{RV2}) appears in \ccite{Theorem 3.2}{FinRiesz} as: if $S_1 \subsetneq S_2$ then $E^>_{S_2} \setminus S_1 \neq \emptyset$.
This is equivalent, to (\textbf{RV2}), since if $S_2 \not\subseteq S_1$ then $S_1 \cap S_2 \subsetneq S_2$, while if $S_1 \subsetneq S_2$ then of course $S_2 \not\subseteq S_1$.
\end{proof}

For each $i =1,\dots,n$, define
\begin{align*}
Z_i &:= \bigcup \{S \in \cS: i \in S^c\}, \text{ and} \\
P_i &:= \bigcup \{S \in \cS: i \in E^{\geq}_S\}.
\end{align*}
Note that $i \not\in Z_i^c$ and $i \in E^\geq_{P_i}$.

For $\e \in \F_{>0}$, define functionals $\alpha^\e_i:\F^n \to \F$ by
\begin{equation}
\alabel{alpha-defn}
 \alpha^\e_i(z_1,\dots,z_n) := z_i + \e \sum_{j \not\in Z_i} z_j;
\end{equation}
and for $R \in \F_{>0}$, define functionals $\beta^R_i:\F^n \to \F$ by
\begin{equation}
\alabel{beta-defn}
\beta^R_i(y_1,\dots,y_n) := y_i - R \sum_{j \not\in P_i, P_i \neq P_j} y_j.
\end{equation}

Let us denote $\alpha^\e := (\alpha^\e_1,\dots, \alpha^\e_n):\F^n \to \F^n$ and $\beta^R := (\beta^R_1,\dots,\beta^R_n):\F^n \to \F^n$.
Then $\alpha^\e$ is block-triangular, and $\beta^R$ is triangular, as we shall now explain.

For indices $i$ and $j$, we have $j \not\in Z_i$ if and only if $Z_i \subseteq Z_j$.
We therefore label the blocks of $(\alpha_1^{\e_1},\dots,\alpha_n^{\e_n})$ by sets $Z \in \cS$, where the $Z^\text{th}$ block consists of indices $i$ such that $Z_i = Z$; we shall use $B_Z$ to denote this set of indices, i.e.
\[ B_Z := \{i =1,\dots,n: Z_i = Z\}. \]

For $\beta^R$, note that if $i \not\in P_i$ then $P_i \subseteq P_j$, and from this it follows that $P_i$ is triangular.

\begin{lemma}
\alabel{Invertibility}
For all $\e,R \in \F_{>0}$, $\alpha^\e$ and $\beta^R$ are invertible.
\end{lemma}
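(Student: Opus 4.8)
The plan is to make precise the (block\nobreakdash-)triangularity of $\alpha^\e$ and $\beta^R$ indicated in the paragraph preceding the lemma, so that in each case invertibility reduces to a determinant computation in which Lemma~\ref{Jinverse} does the only real work. First I would handle $\beta^R$. The key auxiliary observation is that whenever $j \ne i$ and the coefficient of $y_j$ in $\beta^R_i$ is nonzero --- that is, $j \notin P_i$ and $P_i \ne P_j$ --- one has $P_i \subsetneq P_j$. Indeed $P_i$ lies in $\cS$ (a finite lattice, hence closed under unions), and $j \notin P_i$ means $j \in P_i^c = E^0_{P_i} \subseteq E^\geq_{P_i}$; thus $P_i$ is one of the sets $S \in \cS$ with $j \in E^\geq_S$, so $P_i \subseteq P_j$, and $P_i \ne P_j$ upgrades this to strict inclusion. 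Since the term $j = i$ is excluded from the sum in \eqref{beta-defn} (as $P_i \ne P_i$ is false), the coefficient of $y_i$ in $\beta^R_i$ equals $1$. Listing $1,\dots,n$ in any order that refines $i \preceq j \iff P_i \subseteq P_j$ then makes the matrix of $\beta^R$ upper triangular with all diagonal entries equal to $1$, so $\det \beta^R = 1$ and $\beta^R$ is invertible.

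For $\alpha^\e$ I would argue similarly, using the equivalence $j \notin Z_i \iff Z_i \subseteq Z_j$ already recorded. For $i \ne j$ the coefficient of $z_j$ in $\alpha^\e_i$ is $\e$ if $Z_i \subseteq Z_j$ and $0$ otherwise, and since $Z_i$ is a union of sets not containing $i$ we have $i \notin Z_i$, so the term $j = i$ does occur in the sum in \eqref{alpha-defn} and the coefficient of $z_i$ in $\alpha^\e_i$ is $1+\e$. Ordering the blocks $B_Z$ by a linear extension of inclusion on $\cS$, the matrix of $\alpha^\e$ becomes block upper triangular, and its diagonal block at $Z$ is the $m \times m$ matrix with $1+\e$ on the diagonal and $\e$ off it, i.e.\ $I_m + \e J_m$ with $m = |B_Z|$. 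By Lemma~\ref{Jinverse} this block is invertible provided $\e \ne -1/m$, which holds automatically since $\e > 0$; hence $\det \alpha^\e = \prod_Z \det(I_{|B_Z|} + \e J_{|B_Z|}) \ne 0$ and $\alpha^\e$ is invertible.

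The only real content is the pair of inclusion facts --- the claim about the $P_i$ proved above, and the already\nobreakdash-stated analogue for the $Z_i$ --- since these are exactly what forbids a nonzero off\nobreakdash-diagonal entry from ``pointing both ways'' between two distinct blocks, and hence what allows a linear extension of inclusion to (block\nobreakdash-)triangularize the matrix. After that everything is bookkeeping, apart from the single appeal to Lemma~\ref{Jinverse}. I would also check the degenerate case of singleton blocks, where $I_1 + \e J_1 = (1+\e)$ is visibly nonzero; so there is no serious obstacle, the main thing to be careful about being the precise identification of the diagonal blocks of $\alpha^\e$ as $I_m + \e J_m$ rather than some less tractable matrix.
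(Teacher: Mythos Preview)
Your proposal is correct and follows the same route as the paper: $\beta^R$ is triangular with $1$'s on the diagonal, and $\alpha^\e$ is block-triangular with diagonal blocks $I_{|B_Z|}+\e J_{|B_Z|}$, invertible by Lemma~\ref{Jinverse}. You simply flesh out the (block-)triangularity claims that the paper states just before the lemma and then invokes without further justification; in particular your verification that $j\notin P_i$ forces $P_i\subseteq P_j$, and your identification of the diagonal block as $I_m+\e J_m$, are exactly the missing details.
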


\begin{proof}
That $\beta^R$ is invertible follows from the fact that it is triangular with $1$'s on the diagonal.
To show that $\alpha^\e$ is invertible, as we already noted that it is block-triangular, we need to check that each block is invertible.
In matrix form, the $Z^\text{th}$ block of $\alpha^R$ is equal to
\[ I_{|B_Z|} + \e J_{|B_Z|}, \]
and by Lemma \ref{Jinverse}, this block is invertible.
\end{proof}

\begin{notation}
\alabel{ZeroSet-Defn}
For $x \in \F^n$, let us use $S_x$ to denote the smallest set $S \in \cS$ such that $S$ contains
\[ \{i=1,\dots,n : x_i \neq 0\}. \]
\end{notation}

\begin{lemma}
\alabel{ZeroSet-Lemma}
Let $\e,R \in \F_{>0}$ be scalars and let $z \in \F^n$.
Then 
\[ S_z = S_{\alpha^\e(z)} = S_{\beta^R(\alpha^\e(z))}. \]
\end{lemma}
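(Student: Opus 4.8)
The plan is to prove both equalities at once, by isolating the structural feature shared by $\alpha^\e$ and $\beta^R$. Call a linear map $\gamma\colon\F^n\to\F^n$ \emph{$Z$-adapted} if for each $i$ the $i$-th coordinate of $\gamma(x)$ is a linear function of the coordinates $\{x_j: j\notin Z_i\}$ only. Both $\alpha^\e$ and $\beta^R$ are $Z$-adapted. For $\alpha^\e$ this is read off \eqref{alpha-defn} using $i\notin Z_i$ (the $j=i$ case of the equivalence $j\notin Z_i\iff Z_i\subseteq Z_j$ noted just before Lemma~\ref{Invertibility}), so that even the diagonal term $z_i$ has index outside $Z_i$. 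For $\beta^R$ one reads off \eqref{beta-defn} and uses, in addition, the inclusion $Z_i\subseteq P_i$: then every index $j$ occurring in the sum defining $\beta^R_i$ has $j\notin P_i$, hence $j\notin Z_i$, while the diagonal term $y_i$ is fine as before. The inclusion $Z_i\subseteq P_i$ holds because any $S\in\cS$ with $i\in S^c$ satisfies $i\in S^c\subseteq E^{\geq}_S$, so $S$ occurs among the sets whose union defines $P_i$.

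Next I would reformulate $S_x$ in terms of the sets $Z_i$. As $\cS$ is a finite sublattice of $2^{\{1,\dots,n\}}$ it is closed under finite unions and intersections, so $S_x$ is genuinely the smallest member of $\cS$ containing $\{j:x_j\neq0\}$, and $Z_i$ (a union of members of $\cS$) is again a member of $\cS$, indeed the largest one not containing $i$. An elementary argument then shows that, for every $x\in\F^n$ and every index $i$,
\[ i\notin S_x \iff \{j:x_j\neq0\}\subseteq Z_i, \]
that is, $i\notin S_x$ exactly when $x$ lies in the coordinate subspace $W_i:=\{v\in\F^n: v_j=0\text{ for all }j\notin Z_i\}$. (For $\Rightarrow$: since $S_x\in\cS$ and $i\notin S_x$ we get $S_x\subseteq Z_i$; for $\Leftarrow$: since $Z_i\in\cS$ and $i\notin Z_i$, the set $Z_i$ witnesses $i\notin S_x$.)

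Now let $\gamma$ be any invertible $Z$-adapted linear map; I claim $S_{\gamma(x)}=S_x$ for all $x$, which applied with $(\gamma,x)=(\alpha^\e,z)$ and then with $(\gamma,x)=(\beta^R,\alpha^\e(z))$ — both maps being invertible by Lemma~\ref{Invertibility} — yields $S_z=S_{\alpha^\e(z)}=S_{\beta^R(\alpha^\e(z))}$. The key point is that each $W_i$ is $\gamma$-invariant: if $v\in W_i$ and $j\notin Z_i$, then $Z_i\subseteq Z_j$, so $\{k:k\notin Z_j\}\subseteq\{k:k\notin Z_i\}$, and $v$ vanishes on this latter set; since the $j$-th coordinate of $\gamma(v)$ depends only on the coordinates of $v$ indexed outside $Z_j$, it is therefore $0$, and so $\gamma(v)\in W_i$. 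Since $\gamma$ is injective and $W_i$ is finite-dimensional, $\gamma$ maps $W_i$ onto $W_i$, so $\gamma^{-1}(W_i)=W_i$ as well; hence for every $x$ one has $\gamma(x)\in W_i\iff x\in W_i$, which by the reformulation reads $i\notin S_{\gamma(x)}\iff i\notin S_x$. As $i$ was arbitrary, $S_{\gamma(x)}=S_x$.

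I do not expect a serious obstacle, but the step to get right is the invariance-plus-invertibility argument at the end. The tempting alternative — computing coordinatewise with the matrix inverses of $\alpha^\e$ and $\beta^R$ — fails, because triangular matrices pick up fill-in under inversion, so the $i$-th row of $\gamma^{-1}$ need not be supported on $\{j:j\notin Z_i\}$. Using instead the invariant coordinate subspaces $W_i$, which are at the same time cut out by vanishing of coordinates (so that $Z$-adaptedness applies) and characterised by the condition ``$i\notin S_x$'' (so that the conclusion follows), and quoting Lemma~\ref{Invertibility} for invertibility, avoids this; everything else is bookkeeping with the containment order on the $Z_i$.
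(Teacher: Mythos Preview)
Your proof is correct and takes a genuinely different route from the paper's. The paper proceeds by four separate coordinate computations (a)--(d): for each of $\alpha^\e$ and $\beta^R$ it shows directly that $i\notin S_z$ implies the $i$-th output coordinate vanishes, and then for the reverse inclusions runs an induction over the blocks $B_Z$ (respectively over the sets $P_i$) to force the input coordinates to vanish. Your approach instead isolates the common structural feature --- that each coordinate of the output depends only on input coordinates indexed outside $Z_i$ --- names it ``$Z$-adapted'', reformulates $S_x$ via the invariant coordinate subspaces $W_i$, and then shows any invertible $Z$-adapted map permutes each $W_i$, invoking Lemma~\ref{Invertibility} as a black box. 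This is more conceptual and handles $\alpha^\e$ and $\beta^R$ uniformly; the paper's version is more hands-on and in effect re-establishes block invertibility inside step~(b) rather than quoting it. Your observation that $Z_i\subseteq P_i$ (because $S^c\subseteq E^{\geq}_S$) is the one ingredient the paper leaves implicit, and it is exactly what makes $\beta^R$ fit the same mould as $\alpha^\e$.
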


\begin{proof}
To show that $S_{\alpha^\e(z)} \subseteq S_z$, it suffices to show that $\alpha^\e_i(z)=0$ for all $i \not\in S_z$, which we show in (a).
Likewise we show in (b) that $z_i = 0$ for all $i \not\in S_{\alpha^\e(z)}$, in (c) that $\beta^R_i(\alpha^\e(z)) = 0$ for all $i \not\in S_{\alpha^\e(z)}$, and in (d) that $\alpha_i^\e(z) = 0$ for all $i \not\in S_{\beta^R(\alpha^\e(z))}$.

(a)
If $i \not\in S_z$ then $S_z \subseteq Z_i$ and therefore, for every $j \not\in Z_i$ we have $j \not\in S_z$ and so $z_i = 0$.
Since $\alpha^R_i(z)$ is a linear combination of $\{z_j: j \not\in Z_i\}$, it follows that $\alpha^R_i(z) = 0$.

(b)
We shall prove this by induction on the blocks $B_Z$, iterating $Z \in \cS$ in a nonincreasing order.
Since $i \not\in S_{\alpha^R(z)}$ if and only if $Z_i \supseteq S_{\alpha^R(z)}$, we only need to consider $Z \supseteq S_1$.

For a block $Z \supseteq S_{\alpha^R(z)}$ and an index $i \in B_Z$, we have
\begin{equation}
\alabel{ZeroSet-Eqb}
0 = \alpha^\e_i(z) 
= z_i + \e \sum_{j \in B_Z} z_j + \e \sum_{j: Z_j \supsetneq Z} z_j.
\end{equation}
By induction, we have that $z_j = 0$ for all $j$ satisfying $Z_j \subsetneq Z$; that is to say, the last term in \eqref{ZeroSet-Eqb} vanishes.
Hence, the system \eqref{ZeroSet-Eqb} becomes
\[
0 = (I_{|B_Z|} + \e J_{|B_Z|})(z_i)_{i \in B_Z};
\]
and by Lemma \ref{Jinverse}, it follows that $z_i = 0$ for all $i \in B_Z$, as required.

(c)
For (c) and (d), let us set $y := \alpha^\e(x)$.
If $i \not\in S_y$ then again, $S_y \subseteq Z_i$ and so $y_i = 0$ for all $j \not\in Z_i \subseteq P_i$.
Since $\beta^R_i(y)$ is a linear combination of $\{y_i\} \cup \{y_j: j \not\in P_i\}$, $\beta^R_i(y) = 0$.

(d)
If $i \not\in S_{\beta^R(y)}$ then we have
\[
0 = \beta^R_i(y)
= y_i - R\sum_{j \not\in P_i, P_i \supsetneq P_j} y_j.
\]
As above, $j \not\in P_i$ implies that $j \not\in S_{\beta^R(y)}$.
Hence, if we iterate the indices $i \in S_{\beta^R(y)}^c$ in a nondecreasing order of the sets $P_i$ then induction proves $y_i=0$ for all $i \not\in S_{\beta^R(y)}$.
\end{proof}

Our proof makes use of the following positive cone:
\begin{equation}
\alabel{Udefn}
 U^+ := \bigcup_{S \in \cS} \F_{>}^S\, 0^{S^c}.
\end{equation}

\begin{lemma}
\alabel{ForwardImagePositive}
Let $R,\e \in \F_{>0}$ be scalars.
Then:
\begin{enumerate}
\item $\alpha^\e(\F_{\geq 0}^n) \subseteq U_+$, and
\item $\beta^R(U_+) \subseteq V_+$.
\end{enumerate}
\end{lemma}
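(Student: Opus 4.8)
The plan is to handle the two inclusions separately, and in each case to exhibit an explicit $S \in \cS$ witnessing membership in the target cone -- the natural candidate being $S = S_z$ in (i) and the support set of the given element in (ii) -- with Lemma \ref{ZeroSet-Lemma} supplying the information about which coordinates vanish. Throughout I would use that $\cS$ is a sublattice of $2^{\{1,\dots,n\}}$, so that each $Z_i$ and each $P_i$ lies in $\cS$; that $Z_i$ is the largest member of $\cS$ not containing $i$ (in particular $i \notin Z_i$, and any $T\in\cS$ with $i\notin T$ has $T\subseteq Z_i$); and that $i \in E^\geq_{P_i}$, which by (\textbf{RV1}) amounts to $T \mapsto E^\geq_T$ converting unions into intersections.

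For (i), I would fix $z \in \F_{\geq 0}^n$ and set $S := S_z$, aiming to show $\alpha^\e(z) \in \F_{>0}^{S}\,0^{S^c} \subseteq U^+$. Each coordinate $\alpha^\e_i(z) = z_i + \e \sum_{j \notin Z_i} z_j$ is a sum of nonnegative terms, hence $\geq 0$; and $\alpha^\e_i(z) = 0$ for $i \notin S$ because $S_{\alpha^\e(z)} = S_z$ by Lemma \ref{ZeroSet-Lemma}, so the support of $\alpha^\e(z)$ is contained in $S$. The only substantive point is strict positivity on $S$: if $\alpha^\e_i(z) = 0$ for some $i \in S$, then all the nonnegative summands vanish, so $\{j : z_j \neq 0\} \subseteq Z_i$; since $Z_i \in \cS$ this forces $S_z \subseteq Z_i$, hence $i \in Z_i$ -- contradicting $i \notin Z_i$.

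For (ii), I would fix $x \in U^+$, say $x \in \F_{>0}^S\,0^{S^c}$ with $S \in \cS$, so that $\{i : x_i \neq 0\} = S$ and hence $S_x = S$. Since $\alpha^\e$ is invertible (Lemma \ref{Invertibility}), $x$ lies in its image, so Lemma \ref{ZeroSet-Lemma} applied to $(\alpha^\e)^{-1}(x)$ gives $S_{\beta^R(x)} = S$ and therefore $\beta^R_i(x) = 0$ for $i \in S^c = E^0_S$. The coordinates indexed by $E^*_S$ carry no constraint, so it remains to show $\beta^R_i(x) > 0$ for $i \in E^>_S$. For such $i$ we have $i \in E^\geq_S$, so $S$ appears in the union defining $P_i$, whence $S \subseteq P_i$; consequently every index $j$ occurring in the sum $\sum_{j \notin P_i,\, P_i \neq P_j} x_j$ lies outside $S = \{j : x_j \neq 0\}$, so that sum vanishes and $\beta^R_i(x) = x_i > 0$. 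Thus $\beta^R(x) \in 0^{E^0_S}\,\F_{>0}^{E^>_S}\,\F^{E^*_S} \subseteq V^+$.

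I expect the sign bookkeeping to be routine; the two places needing care are the implication $\{j : z_j \neq 0\} \subseteq Z_i \Rightarrow S_z \subseteq Z_i$ in (i) and, above all, the inclusion $S \subseteq P_i$ for $i \in E^\geq_S$ in (ii). Both hinge on $Z_i, P_i \in \cS$ and on the extremal descriptions of these sets, and the second is exactly where (\textbf{RV1}) does the essential work, so that is the step I would be most careful to get right.
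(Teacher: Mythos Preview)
Your proposal is correct and follows essentially the same route as the paper: in (i) you use Lemma~\ref{ZeroSet-Lemma} for vanishing outside $S_z$ and the contradiction $S_z \subseteq Z_i \ni i$ for strict positivity on $S_z$, and in (ii) you use Lemma~\ref{ZeroSet-Lemma} for vanishing on $S^c$ and the inclusion $S \subseteq P_i$ (from the definition of $P_i$) to kill the sum in $\beta^R_i$ when $i \in E^>_S$. The only cosmetic difference is that you invoke invertibility of $\alpha^\e$ explicitly in order to apply Lemma~\ref{ZeroSet-Lemma} to an element of $U^+$, which is a small extra bit of care; note also that the key inclusion $S \subseteq P_i$ follows directly from the definition of $P_i$ as a union and does not actually require (\textbf{RV1}).
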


\begin{proof}
(i)
Let $z \in \F_{\geq 0}^n$.
By Lemma \ref{ZeroSet-Lemma}, we know that $\alpha^{\e_{Z_i}}_i(z)=0$ for $i \not\in S_z$.
Let us show that $\alpha^{\e_{Z_i}}_i(z) > 0$ for $i \in S_z$, from which it follows that $\alpha^\e(z) \in U^+$.

For $i \in S_z$, we have
\[ \alpha^{\e_{Z_i}}_i(z) = z_i + \e \sum_{j \not\in Z_i} z_j; \]
so evidently $\alpha^{\e_{Z_i}}_i(z) \geq 0$ and $\alpha^{\e_{Z_i}}_i(z) = 0$ would imply that $z_j = 0$ for all $j \not\in Z_i$.
But if that were the case, then we would have $S_z \subseteq Z_i$, and in particular, $i \not\in S_z$, which is a contradiction.
Hence $\alpha^{\e_{Z_i}}_i(z) > 0$.

(ii)
Let $y \in U^+$.
Then we must have $y_i > 0$ for all $i \in S_y$.
By Lemma \ref{ZeroSet-Lemma}, we already know that $\beta^R_i(y) = 0$ for all $i \in S_y^c = E^0_{S_y}$.
Thus, we need only show that $\beta^R_i(y) > 0$ for $i \in E^>_{S_y}$.
For such an $i$, we have
\[ \beta^R_i(y) = y_i - R\sum_{j \not\in P_i, P_j \neq P_i} y_j. \]
By (i), we know that $y_i > 0$.
Since $i \in E^>_{S_y}$, we have $P_i \subseteq S_y$.
Therefore if $j \not\in P_i$ then $j \not\in S_y$ and so $y_j = 0$.
Thus, we in fact have $\beta^R_i(y) = y_i > 0$.
\end{proof}

\begin{lemma}
\alabel{UnionIsAll}
Let $U^+$ be as defined in \eqref{Udefn}.
Then:
\begin{enumerate}
\item $U^+ = \bigcup_{\e \in \F_{>0}} \bigcap_{\e' \in \F_{>0}, \e' < \e} \alpha^{(\e'_Z)}(\F_{\geq 0}^n).$
\item $V^+ = \bigcup_{R \in \F_{>0}} \bigcap_{R' \in \F, R' > R} \beta^{R'}(U^+).$
\end{enumerate}
\end{lemma}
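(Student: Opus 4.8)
The plan is, in each part, to read off one inclusion from Lemma~\ref{ForwardImagePositive} and to prove the reverse inclusion by inverting the relevant family of maps and tracking the sign of the preimage as the parameter degenerates: $\e'\to 0^+$ in part~(i) and $R'\to\infty$ in part~(ii). The first inclusions are immediate: for $\e'\in\F_{>0}$ we have $\alpha^{\e'}(\F_{\geq 0}^n)\subseteq U^+$ by Lemma~\ref{ForwardImagePositive}(i), so each set $\bigcap_{\e'<\e}\alpha^{\e'}(\F_{\geq 0}^n)$ occurring in~(i), and hence their union, is contained in $U^+$; part~(ii) is identical using Lemma~\ref{ForwardImagePositive}(ii).

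For the reverse inclusion in~(i), fix $y\in U^+$ and set $S:=S_y$, so that $y_i>0$ for $i\in S$ and $y_i=0$ otherwise. Since $\alpha^{\e'}=I+\e'N$ for a fixed matrix $N$ (with $\alpha^0=I$), the preimage $z^{(\e')}:=(\alpha^{\e'})^{-1}(y)$ is, entrywise, a rational function of $\e'$ regular at $\e'=0$ with value $y$ there; it is $\F$-valued for every $\e'\in\F_{>0}$ by Lemma~\ref{Invertibility}, and by Lemma~\ref{ZeroSet-Lemma} we have $S_{z^{(\e')}}=S$, so $z^{(\e')}_i=0$ for $i\notin S$. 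For the finitely many $i\in S$, $z^{(\e')}_i\to y_i>0$ as $\e'\to 0^+$, so there is $\e\in\F_{>0}$ (indeed $\e$ may be taken rational) such that $z^{(\e')}_i>0$ for all $i\in S$ and all $\e'\in\F_{>0}$ with $\e'<\e$. For such $\e'$ we get $z^{(\e')}\in\F_{\geq 0}^n$, hence $y=\alpha^{\e'}(z^{(\e')})\in\alpha^{\e'}(\F_{\geq 0}^n)$, hence $y\in\bigcap_{\e'<\e}\alpha^{\e'}(\F_{\geq 0}^n)$. This proves~(i).

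For the reverse inclusion in~(ii), fix $v\in V^+$. I would first observe that Theorem~\ref{CombDescr} together with~(\textbf{RV2}) forces $v$ into the block indexed by $S:=S_v$: namely $v_i=0$ for $i\notin S$, $v_i>0$ for $i\in E^>_S$, and $v_i\in\F$ unconstrained for $i\in E^*_S$. Since $\beta^{R'}$ is triangular with $1$'s on the diagonal we may write $\beta^{R'}=I-R'M$ with $M$ a fixed nilpotent matrix, so $(\beta^{R'})^{-1}=\sum_{k=0}^{n-1}(R')^kM^k$ and the entries of $y^{(R')}:=(\beta^{R'})^{-1}(v)$ are polynomials in $R'$; they are $\F$-valued by Lemma~\ref{Invertibility}, and $S_{y^{(R')}}=S$ by Lemma~\ref{ZeroSet-Lemma}, so $y^{(R')}_i=0$ for $i\notin S$. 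It then suffices to show that $y^{(R')}_i>0$ for all $i\in S$ once $R'$ is large, for then $y^{(R')}$ has positive entries on $S$ and zero entries off $S$, hence lies in $U^+$, and $v=\beta^{R'}(y^{(R')})\in\beta^{R'}(U^+)$. I would establish this by induction on $i$ taken in decreasing order of $P_i$. If $i\in E^>_S$ then $S\subseteq P_i$, so every index $j$ in the correction sum of $\beta^{R'}_i$ satisfies $j\notin P_i$, hence $j\notin S$, hence $y^{(R')}_j=0$, and therefore $y^{(R')}_i=v_i>0$. If $i\in E^*_S$, then rearranging $\beta^{R'}_i(y^{(R')})=v_i$ gives $y^{(R')}_i=v_i+R'\sum_{j\notin P_i,\,P_j\neq P_i}y^{(R')}_j$, and one checks---using that $i\in E^*_S$ forces $S\not\subseteq P_i$ (via~(\textbf{RV1})) and then invoking~(\textbf{RV2})---that the index set of this sum meets $S$, so it contains some $j$, necessarily with $P_j\supsetneq P_i$, to which the inductive hypothesis applies. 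As each such $y^{(R')}_j$ is then bounded below by a positive constant for large $R'$, so is the sum, whence $y^{(R')}_i\to+\infty$. Taking $R$ above the finitely many thresholds so obtained (again $R$ may be taken rational) yields $v\in\bigcap_{R'>R}\beta^{R'}(U^+)$, completing~(ii).

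I expect the induction in part~(ii) to be the main obstacle. In part~(i) the target $y$ is strictly positive on every coordinate that matters and one merely perturbs around the identity matrix, whereas in~(ii) the coordinates of $v$ indexed by $E^*_S$ carry no sign information at all, so their eventual positivity in $y^{(R')}$ must be manufactured from coordinates with strictly larger $P_i$; verifying that the recursion always reaches such a coordinate, and that $S=S_v$ is a legitimate block to begin with, is precisely where the structural axioms~(\textbf{RV1}) and~(\textbf{RV2}) are needed and where the combinatorial bookkeeping is most delicate. Everything else---the first inclusions, the elimination of the off-$S$ coordinates via Lemma~\ref{ZeroSet-Lemma}, and all of part~(i)---is a routine continuity-and-perturbation argument.
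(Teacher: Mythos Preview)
Your proof is correct, and for part~(ii) it is essentially the paper's argument: the same induction down the partial order on the sets $P_i$, the same reduction $y^{(R')}_i=v_i$ for $i\in E^>_S$, and the same appeal to (\textbf{RV2}) (with (\textbf{RV1}) implicit in the paper's line ``$S_x\not\subseteq P_i$'') to locate a strictly positive summand that forces $y^{(R')}_i\to+\infty$. One small tightening: your phrase ``bounded below by a positive constant'' is not quite what your inductive hypothesis literally provides for the $E^*_S$-indices; what you actually use is that all terms of the sum are eventually nonnegative while the (\textbf{RV2})-witness $j_0\in E^>_S$ contributes the fixed positive constant $v_{j_0}$, which is exactly how the paper phrases it.

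Where you genuinely diverge from the paper is part~(i). The paper inverts $\alpha^{\e}$ block by block using the explicit formula of Lemma~\ref{Jinverse}, and carries the quantitative two-sided bound $0\le z_i\le M$ through an induction on the blocks $B_Z$, yielding the explicit threshold $\e<m/(2nM)$. You instead observe that $\alpha^{\e'}=I+\e'N$ with $N$ fixed, so $(\alpha^{\e'})^{-1}y$ is an $\F$-valued vector whose entries are rational functions of $\e'$ regular at $0$ with value $y$; continuity then gives positivity of the $S_y$-coordinates for all small $\e'$. Your route is shorter and avoids both the block decomposition and Lemma~\ref{Jinverse}; the paper's route, on the other hand, produces an explicit $\e$ in terms of the data, which is occasionally useful downstream (e.g.\ in the constructive choice of parameters in Lemma~\ref{EpsilonExists}).
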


\begin{proof}
(i)
Let $y \in U^+$.
Define $m := \min \{|y_i|: i \in S_y\} > 0$ and $M := \max \{|y_i|: i \in S_y\}$, and suppose that $\e \in \F_{>0}$ is such that
\[ \e < \frac{m}{2nM}, \]
for all $Z \in \S$.
Let us show that $z = (\alpha^{\e})^{-1}(y)$ satisfies $z_i \geq 0$ for all $i$.

We will show, by induction on the blocks $B_Z$ (iterating $Z \in \cS$ in a nonincreasing order), that
\[ 0 \leq z_i \leq M \]
for all $i \in B_Z$.
By the definition of $S_y$, we already know that this holds for $Z \supseteq S_y$ (for if $i \not\in S_y$ then $z_i = 0$ for $i \not\in S_y$).

For $i \in B_Z \cap S_y$, set
\[ C_i := z_i + \e \sum_{j \in B_Z} z_j = y_i - \e \sum_{j \not\in Z, Z_j \supsetneq Z} z_j. \]

Then we have
\[ C_i \geq m - \e n M > m - m/2 = m/2 \]
and
\[ C_i \leq M. \]

By Lemma \ref{Jinverse}, we have
\[ z_i = C_i - \frac{\e}{n\e+1} \sum_{j \in B_Z} C_j. \]
On the one hand, this gives
\[ z_i > m/2 - \e nM = m/2 - m/2 = 0, \]
and on the other, it gives
\[ z_i \leq C_i \leq M, \]
as required.

(ii)
Let $x \in V^+$.
For $R \in \F_{>0}$ let us denote $y^R = (y^R_1,\dots,y^R_n) := (\beta^R)^{-1}(x)$.
For all $i \not\in S_x$ we already know that $y^R_i = 0$ for all $i$.
Moreover, for all $i \in E^>_{S_x}$ and all $R$, we have
\[ x_i = y^R_i - R \sum_{j \not\in P_i, P_j \neq P_i} y^R_i; \]
but note that if $j \not\in P_i \supseteq S_x$ then $j \not\in S_x$, and therefore we have $y^R_i = x_i > 0$.

We will show by induction that, for each $i \in E^*_{S_x}$ there exists $R_i \in \F_{>0}$ such that for all $R'' \geq R' \geq R_i$, we have
\[ y_i^{R''} > y_i^{R'} > 0. \]
We iterate the indices $i$ in a nonincreasing order of $P_i$.

For the index $i$, we have
\begin{equation}
\alabel{UnionIsAll-yEq}
y^R_i = x_i + R \sum_{j \not\in P_i, P_j \supsetneq P_i} y^R_j.
\end{equation}
If we require that $R \geq \max\{R_j: P_j \supsetneq P_i\}$ then, by induction, we know that $y^R_j \geq 0$ for all $j \not\in P_i$.
Moreover, since $i \not\in E^{\leq}_{S_x}$, this means that $S_x \not\subseteq P_i$ and therefore by (\textbf{RV2}) in Theorem \ref{CombDescr}, there exists some $j_0 \in E^>_{S_x} \setminus P_i$.
Notice that $P_j \subseteq S_x$ so that $y^R_j$ does appear as a summand in the right-hand side of \eqref{UnionIsAll-yEq}.
Thus, we have
\[
y^R_i = x_i + R \sum_j \not\in P_i, P_j \neq P_i y^R_j \geq x_i + R y^R_{j_0} = x_i + R x_j.
\]
Since $x_j > 0$, there exists $R=R_i$ for which the right-hand side is positive, and so $y^R_i > 0$.

Since $y^R_j$ is a nondecreasing function of $R$ for all $j$ for which $P_j \supsetneq P_i$, it is clear from \eqref{UnionIsAll-yEq} that so is $y^R_i$.
\end{proof}

\begin{lemma}
\alabel{EpsilonExists}
Let $R_1,\e_1 \in \F_{>0}$ be scalars.
For any $R' > R_1$, there exist $R_2,\e_2 \in \F_{>0}$ with $R_2 > R'$ and $\e_2 < \e_1$ such that
\[ \beta^{R_1}(\alpha^{\e_1}(\F_{\geq 0}^n)) \subseteq \beta^{R_2}(\alpha^{\e_2}(\F_{\geq 0}^n)). \]
\end{lemma}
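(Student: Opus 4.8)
The plan is to reduce the claim to the assertion that the composite linear map
\[
T \;:=\; (\alpha^{\e_2})^{-1}\circ(\beta^{R_2})^{-1}\circ\beta^{R_1}\circ\alpha^{\e_1}\colon \F^n\to\F^n
\]
carries $\F_{\geq 0}^n$ into itself, and then to choose $R_2>R'$ and $\e_2<\e_1$ (uniformly in the input) making this happen: for $z\in\F_{\geq 0}^n$ the vector $w:=T(z)$ then lies in $\F_{\geq 0}^n$ and satisfies $\beta^{R_1}(\alpha^{\e_1}(z))=\beta^{R_2}(\alpha^{\e_2}(w))$, which is exactly the desired inclusion. So I would fix $z\in\F_{\geq 0}^n$, set $S:=S_z$, $p:=\alpha^{\e_1}(z)$ (which by Lemma~\ref{ForwardImagePositive}~(i) is supported on $S$ with $p_i>0$ for $i\in S$), $r:=(\beta^{R_2})^{-1}\beta^{R_1}(p)$, and $w:=(\alpha^{\e_2})^{-1}(r)$, and try to force $w\geq 0$.

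The first step is to pick an arbitrary $R_2>R'$ (hence $R_2>R_1$) and describe $r$. Writing $\beta^R=I-RN$ with $N$ the strictly triangular matrix implicit in \eqref{beta-defn}, the relation $\beta^{R_2}r=\beta^{R_1}p$ rearranges to $r=p-R_1Np+R_2Nr$. A joint induction over the indices taken in nonincreasing order of $P_i$ — using that $i\notin S$ forces $S\subseteq P_i$ (then $i\in E^0_S\subseteq E^\geq_S$, and $P_i$ is the union of all such sets), so that at such $i$ the $N$-sums involve only coordinates outside $S$ — should show that $r$ is supported on $S$, that $r_i=p_i$ for $i\in E^>_S$, and that $r_i\geq p_i\;(>0)$ for $i\in E^*_S$, the last point because $R_2\geq R_1$ and the inductive hypothesis $r_j\geq p_j\geq 0$ give $R_2(Nr)_i\geq R_1(Np)_i$. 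Crucially this step is completely uniform in $z$; in particular $r\in U^+$.

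The second, decisive step is the choice of $\e_2$. Writing $\alpha^\e=I+\e M$ and solving $\alpha^{\e_2}w=r$ one $Z$-block at a time (in nonincreasing order of $Z$), Lemma~\ref{Jinverse} gives, for $i\in B_Z$,
\[
w_i \;=\; r_i-\frac{\e_2}{\e_2|B_Z|+1}\Bigl(\sum_{j\in B_Z}r_j+c_Z\Bigr),
\qquad c_Z:=\sum_{Z'\supsetneq Z}\ \sum_{j\in B_{Z'}}w_j .
\]
Blocks with $Z\supseteq S$ lie in $S^c$, so (as $r$ is supported on $S$) they contribute only $w_i=0$; the work is with blocks $B_Z$ for which $Z\not\supseteq S$. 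Here $B_Z\subseteq Z^c\cap S$ and, crucially, $T_Z:=\sum_{l\notin Z}z_l>0$ — otherwise $\operatorname{supp} z\subseteq Z$, forcing $S=S_z\subseteq Z$. The heart of the argument is a $z$-free comparison: for each $k\notin Z$ one has $z_k\leq T_Z$ and, since $Z\in\cS$ with $k\notin Z$, also $Z\subseteq Z_k$, whence $p_k=z_k+\e_1\sum_{l\notin Z_k}z_l\leq(1+\e_1)T_Z$; feeding this into $r_k\leq p_k+R_2(Nr)_k$ and noting that the indices occurring in $(Nr)_k$ again lie outside $Z$ (as $P_k\supseteq Z_k\supseteq Z$), an induction over the $P$-order yields $r_k\leq C\,T_Z$ for every $k\notin Z$, with $C=C(\e_1,R_2,n,\cS)$ independent of $z$. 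Since $B_Z$ and every block above it lie in $Z^c$, this gives $\sum_{j\in B_Z}r_j\leq nC\,T_Z$ and $c_Z\leq nC\,T_Z$ (using also $w_j\leq r_j$ on the already-processed blocks, which follows from $c_{Z'}\geq 0$ and $r\geq 0$), while $r_i\geq p_i\geq\e_1T_Z$ for every $i\in B_Z$. Hence
\[
w_i \;\geq\; \e_1T_Z-\e_2\Bigl(\sum_{j\in B_Z}r_j+c_Z\Bigr) \;\geq\; (\e_1-2nC\e_2)\,T_Z,
\]
so $w_i\geq 0$ once $\e_2\leq\e_1/(2nC)$; as $C$ is independent of $z$, one fixed such $\e_2$ (taken also $<\e_1$) works for all $z$, finishing the proof.

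The step I expect to be the genuine obstacle is precisely this last uniformity. The cone $\beta^{R_1}\alpha^{\e_1}(\F_{\geq 0}^n)$ contains vectors with arbitrarily degenerate coordinate ratios, so no single $\e_2$ makes $(\alpha^{\e_2})^{-1}$ positive on all of $U^+$ and a naive limiting choice fails. What rescues the argument is the interplay of keeping $R_2\geq R_1$ — so the passage from $p$ to $r$ distorts the relevant positivity only by a $z$-independent factor — with the combinatorial nesting $k\notin Z\Rightarrow Z\subseteq Z_k$, which lets the one quantity $T_Z$ bound from above every coordinate that enters the estimate and bound from below the coordinates $r_i$ with $i\in B_Z$, so that $T_Z$ cancels out of the final inequality.
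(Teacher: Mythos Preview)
Your argument is essentially correct: the first step (any $R_2\geq R_1$ gives $r\in U^+$ with $r_i\geq p_i$ on $S$) and the second step (a $z$-free choice of $\e_2$ via the cancellation of $T_Z$) both go through, with the joint induction $0\leq w_j\leq r_j$ on previously processed blocks supplying the needed bound on $c_Z$.

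However, the paper's proof is dramatically shorter and bypasses the uniformity issue you flag entirely. The key observation you are missing is that $\F_{\geq 0}^n$ is the cone generated by the \emph{finite} set $e_1,\dots,e_n$. Hence it suffices to find $R_2,\e_2$ such that $\beta^{R_1}(\alpha^{\e_1}(e_i))\in\beta^{R_2}(\alpha^{\e_2}(\F_{\geq 0}^n))$ for each $i=1,\dots,n$; the inclusion for the whole cone then follows by linearity. For each $e_i$ one has $\beta^{R_1}(\alpha^{\e_1}(e_i))\in V^+$ by Lemma~\ref{ForwardImagePositive}, and Lemma~\ref{UnionIsAll} then supplies, for these $n$ vectors, a common $R_2>R'$ with $(\beta^{R_2})^{-1}\beta^{R_1}\alpha^{\e_1}(e_i)\in U^+$, and in turn a common $\e_2<\e_1$ pushing them into $\F_{\geq 0}^n$. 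Taking a maximum of finitely many $R$'s and a minimum of finitely many $\e$'s is all the ``uniformity'' needed. Your approach reproves, in quantitative form, what Lemma~\ref{UnionIsAll} already provides, and the explicit constant $C=C(\e_1,R_2,n,\cS)$ you extract is the only thing gained for the extra work.
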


\begin{proof}
Let $e_1,\dots,e_n$ be the canonical basis for $\F^n$, so that $\F_{\geq 0}^n$ is the cone generated by $e_1,\dots,e_n$.
Then for each of $i=1,\dots,n$, we have by Lemma \ref{ForwardImagePositive} that
\[ \beta^{R_1}(\alpha^{\e_1}(e_i)) \in V_+; \]
and thus by Lemma \ref{UnionIsAll} (i), there exists $R_2 > R'$ such that
\[ (\beta^{R_2})^{-1}(\beta^{R_1}(\alpha^{\e_1}(e_i))) \in U_+ \]
for all $i=1,\dots,n$.
By Lemma \ref{UnionIsAll} (ii), there then exists $\e_2 < \e_1$ such that
\[ (\alpha^{\e_1})^{-1}((\beta^{R_2})^{-1}(\beta^{R_1}(\alpha^{\e_1}(e_i)))) \in \F_{\geq 0}^n \]
for all $i=1,\dots,n$, which is to say,
\[ \beta^{R_1}(\alpha^{\e_1}(e_i)) \in \beta^{R_2}(\alpha^{\e_2}(\F_{\geq 0}^n)). \]
Since $\F_{\geq 0}^n$ is the cone generated by $e_1,\dots,e_n$, it follows that
\[ \beta^{R_1}(\alpha^{\e_1}(\F_{\geq 0}^n)) \subseteq \beta^{R_2}(\alpha^{\e_2}(\F_{\geq 0}^n)), \]
as required.
\end{proof}

\begin{proof}[Proof of Theorem \ref{MainResult}]
Let $R_1,\e_1 \in \F_{>0}$, and, using Lemma \ref{EpsilonExists}, inductively construct sequences $(R_i), (\e_i) \subset \F_{>0}$, such that $R_i \to \infty, \e_i \to 0$ and for each $i$,
\[ \beta^{R_i}(\alpha^{\e_i}(\F_{\geq 0}^n)) \subseteq \beta^{R_{i+1}}(\alpha^{\e_{i+1}}(\F_{\geq 0}^n)). \]
Set $\phi_i = \beta^{R_i} \circ \alpha^{\e_i}:\F^n \to \F^n$.
By Lemma \ref{UnionIsAll}, we have $V^+ = \bigcup_{i=1}^\infty \phi_i(\F_{\geq 0}^n)$.

Our inductive system is thus 
\[ (\F^n,\F_{\geq 0}^n) \labelledrightarrow{\phi_{2}^{-1} \circ \phi_1} (\F_n,\F_{\geq 0}^n) \labelledrightarrow{\phi_3^{-1} \circ \phi_2} \cdots; \]
as explained in Section \ref{PrelimSec}, the inductive limit is
\[ (\F^n, \bigcup_{i=1}^\infty \phi_i(\F_{\geq 0}^n)) = (V,V^+), \]
as required.
\end{proof}

\section{Consequences}

\begin{cor}
\alabel{QSimplicial}
Let $(V,V^+)$ be an $n$-dimensional ordered directed $\Q$-vector space with Riesz interpolation.
Then there exists an inductive system of ordered groups
\[ (\Z^n,\Z_{\geq 0}^n) \labelledrightarrow{\phi_1^2} (\Z^n,\Z_{\geq 0}^n) \labelledrightarrow{\phi_2^3} \cdots \]
whose inductive limit is $(V,V^+)$.
\end{cor}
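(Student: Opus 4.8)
The plan is to derive Corollary~\ref{QSimplicial} from Theorem~\ref{MainResult} by refining the $\Q^n$-system it produces into a $\Z^n$-system via an interleaving (diagonal) argument. First I would apply Theorem~\ref{MainResult} with $\F=\Q$ to get an inductive system
\[ (\Q^n,\Q_{\geq 0}^n) \labelledrightarrow{\psi_1} (\Q^n,\Q_{\geq 0}^n) \labelledrightarrow{\psi_2} \cdots \]
with inductive limit $(V,V^+)$ and canonical maps $\psi_i^\infty\colon\Q^n\to V$. Each $\psi_i$ is an invertible, positivity-preserving linear map, hence is given by an invertible matrix with entries in $\Q_{\geq 0}$, so there is $q_i\in\Z_{>0}$ with $q_i\psi_i$ an integer matrix.

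The second ingredient is that $(\Q^n,\Q_{\geq 0}^n)$ is itself a sequential inductive limit of copies of $(\Z^n,\Z_{\geq 0}^n)$: identifying the $k^{\text{th}}$ copy of $\Z^n$ with $\tfrac1{k!}\Z^n\subseteq\Q^n$, the connecting maps are multiplication by $k+1$ and the canonical (positivity-preserving) map out of the $k^{\text{th}}$ stage is the inclusion $z\mapsto\tfrac1{k!}z$, whose images exhaust $\Q^n$ since $\bigcup_k\tfrac1{k!}\Z^n=\Q^n$. I would then pick a strictly increasing sequence $j_1<j_2<\cdots$ in $\Z_{>0}$ growing fast enough that, for every $i$, the integer $j_{i+1}!/j_i!$ is divisible both by $q_i$ and by $(i+1)!$ (this is possible because $j_{i+1}!/j_i!$ is a product of consecutive integers exceeding $j_i$, hence divisible by any prescribed integer once $j_{i+1}$ is large). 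Put $\theta_i:=\tfrac{j_{i+1}!}{j_i!}\psi_i$; the divisibility by $q_i$ makes $\theta_i$ an integer matrix, and its entries are nonnegative, so $\theta_i$ is a positivity-preserving homomorphism $(\Z^n,\Z_{\geq 0}^n)\to(\Z^n,\Z_{\geq 0}^n)$. The inclusions $\nu_i\colon z\mapsto\tfrac1{j_i!}z$ of $\Z^n$ into the $i^{\text{th}}$ copy of $\Q^n$ are positivity-preserving and satisfy $\psi_i\circ\nu_i=\nu_{i+1}\circ\theta_i$, so they induce a positivity-preserving homomorphism $\Theta$ from $\lim\bigl((\Z^n,\Z_{\geq 0}^n),\theta_i\bigr)$ into $\lim\bigl((\Q^n,\Q_{\geq 0}^n),\psi_i\bigr)=(V,V^+)$.

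It then remains to verify that $\Theta$ is an isomorphism of ordered abelian groups, which I expect to be the step needing the most care. Injectivity follows at once from injectivity of the $\psi_i$. For surjectivity: an element of $V$ is represented by some $w\in\Q^n$ at some stage $i$; transporting it to a later stage $i'$ replaces $w$ by $\psi_{i'-1}\circ\cdots\circ\psi_i(w)$, whose denominator divides $N q_i q_{i+1}\cdots q_{i'-1}$ where $N$ is a denominator of $w$, and the growth condition on the $j_k$ ensures $j_{i'}!$ is divisible by this quantity for $i'$ large (the factors $(k+1)!$ appearing in $j_{k+1}!/j_k!$ are what eventually absorb the fixed $N$). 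Thus this transported element lies in $\nu_{i'}(\Z^n)$, hence in the image of $\Theta$. For the same reason, if the original element lies in $V^+=\bigcup_i\psi_i^\infty(\Q_{\geq 0}^n)$ then $w$ may be chosen in $\Q_{\geq 0}^n$, and then its transport lies in $\Q_{\geq 0}^n\cap\nu_{i'}(\Z^n)=\nu_{i'}(\Z_{\geq 0}^n)$, so $\Theta^{-1}$ is positivity-preserving; hence $\Theta$ is an order-isomorphism. Taking $\phi_i^{i+1}:=\theta_i$ then exhibits $(V,V^+)$ as the inductive limit of a sequence of ordered groups $(\Z^n,\Z_{\geq 0}^n)$, as required. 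The one genuine subtlety is organising a \emph{single} sequence $(j_i)$ whose factorials simultaneously clear the denominators of every connecting map $\psi_i$ yet grow quickly enough to exhaust $\Q$; the finite generation of $\Z^n$ is what lets this diagonalisation go through.
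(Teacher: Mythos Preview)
Your argument is correct. Both your proof and the paper's begin by applying Theorem~\ref{MainResult} with $\F=\Q$ and then clearing denominators in the connecting maps by positive scalars, but they diverge after that. The paper restricts the rescaled maps to $\Z^n$ to obtain an auxiliary limit $(G,G^+)$ with $(G,G^+)\otimes_\Z(\Q,\Q_{\geq 0})\cong(V,V^+)$, and then quotes the ordered tensor product machinery of Goodearl--Handelman (together with a realisation of $(\Q,\Q_{\geq 0})$ as a limit of copies of $(\Z,\Z_{\geq 0})$) to rewrite this tensor product as a sequential $(\Z^n,\Z_{\geq 0}^n)$-limit. You instead carry out an explicit interleaving: you embed $\Z^n$ into the $i^{\text{th}}$ copy of $\Q^n$ via $\nu_i(z)=z/j_i!$ with the $j_i$ chosen so that the rescaled maps $\theta_i=(j_{i+1}!/j_i!)\psi_i$ are integral, and then verify by hand that the induced map on limits is an order-isomorphism. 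Your route is more elementary and self-contained, avoiding the external citation; the paper's is shorter and more conceptual. One small tightening: to make the surjectivity step clean you want $j_{i+1}!/j_i!$ divisible by the \emph{product} $q_i\cdot(i+1)!$, not merely by each factor separately --- but as you note, $j_{i+1}!/j_i!$ can be made divisible by any prescribed integer, so this is only a wording issue.
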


\begin{proof}
By Theorem ref{MainResult}, let
\[ (\Q^n, \Q_{\geq 0}^n) \labelledrightarrow{\phi_i^2} (\Q^n, \Q_{\geq 0}^n) \labelledrightarrow{\phi_2^3} \cdots \]
be an inductive system whose limit is $(V,V^+)$.
Since positive scalar multiplication gives an isomorphism of any ordered vector space, we may replace any of the connecting maps with a positive scalar multiples, and still get $(V,V^+)$ in the limit.
Hence, we may assume without loss of generality that $\phi_i^{i+1}(\Z^n) \subseteq \Z^n$.
Then, letting $\overline{\phi}_i^{i+1} = \phi_i^{i+1}|_{\Z^n}$, we have an inductive system
\[ (\Z^n,\Z_{\geq 0}^n) \labelledrightarrow{\overline{\phi}_i^2} (\Z^n, \Z_{\geq 0}^n) \labelledrightarrow{\overline{\phi}_2^3} \cdots, \]
whose limit $(G,G^+)$ satisfies $(G,G^+) \tens_\Z (\Q,\Q^+) \iso (V,V^+)$.

Now, we may easily find an inductive system 
\[ (\Z,\Z_{\geq 0}) \labelledrightarrow{\psi_1^2} (\Z,\Z_{\geq 0}) \labelledrightarrow{\psi_2^3} \cdots \]
whose limit is $(\Q,\Q_{\geq 0})$.
(Such an inductive system necessarily has $\psi_i^{i+1}$ given by multiplication by a positive scalar $N_i$; and the limit is $(\Q,\Q_{\geq 0})$ as long as every prime occurs as a root of infinitely many $N_i$.)

Thus, by \ccite{Lemma 2.2}{GoodearlHandelman:tens}, $(V,V^+)$ is the inductive limit of
\[ (\Z^n,\Z_{\geq 0}^n) \tens_\Z (\Z,Z_{\geq 0}) \labelledrightarrow{\overline{\phi}_1^2 \tens_\Z \psi_1^2} (\Z^n, \Z_{\geq 0}^n) \tens_\Z (\Z,\Z_{\geq 0}) \labelledrightarrow{\overline{\phi}_2^3 \tens_\Z \psi_2^3} \cdots, \]
which is what we require, since $(G,G^+) \tens_\Z (\Z,\Z_{\geq 0}) = (G,G^+)$ for any ordered abelian group $(G,G^+)$.
\end{proof}

\begin{cor}
Let $(G,G^+)$ be a rank $n$ ordered directed free abelian group with Riesz interpolation.
Then there exists an inductive system of ordered groups
\[ (\Z^n,\Z_{\geq 0}^n) \labelledrightarrow{\phi_1^2} (\Z^n,\Z_{\geq 0}^n) \labelledrightarrow{\phi_2^3} \cdots \]
whose inductive limit is $(G,G^+) \tens_{\Z} (\Q,\Q_{\geq 0})$.
\end{cor}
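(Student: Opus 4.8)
The plan is to reduce the statement to Corollary \ref{QSimplicial}. I would set $(V,V^+) := (G,G^+) \tens_\Z (\Q,\Q_{\geq 0})$ and show that it is an $n$-dimensional ordered directed $\Q$-vector space with Riesz interpolation; granting this, Corollary \ref{QSimplicial} immediately hands us an inductive system
\[ (\Z^n,\Z_{\geq 0}^n) \labelledrightarrow{\phi_1^2} (\Z^n,\Z_{\geq 0}^n) \labelledrightarrow{\phi_2^3} \cdots \]
with inductive limit $(V,V^+) = (G,G^+)\tens_\Z(\Q,\Q_{\geq 0})$, which is precisely the claim. So all the content is in verifying that $(V,V^+)$ satisfies the hypotheses of that corollary.

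For this I would proceed as in the proof of Corollary \ref{QSimplicial}: realise $(\Q,\Q_{\geq 0})$ as the inductive limit of a sequence $(\Z,\Z_{\geq 0}) \to (\Z,\Z_{\geq 0}) \to \cdots$ whose connecting maps are multiplication by positive integers $N_i$ with every prime dividing infinitely many $N_i$. By \ccite{Lemma 2.2}{GoodearlHandelman:tens}, tensoring with $(G,G^+)$ commutes with this inductive limit, and since $(G,G^+) \tens_\Z (\Z,\Z_{\geq 0}) = (G,G^+)$, this realises $(V,V^+)$ as the inductive limit of
\[ (G,G^+) \labelledrightarrow{\cdot N_1} (G,G^+) \labelledrightarrow{\cdot N_2} \cdots. \]
From this description the required properties should all fall out: Riesz interpolation is preserved by inductive limits (Section \ref{PrelimSec}), so $(V,V^+)$ has it; the underlying group of $V$ is the inductive limit of $G \to G \to \cdots$ under the maps $\cdot N_i$, which, since every prime divides infinitely many $N_i$, is $G \tens_\Z \Q$ --- a $\Q$-vector space of dimension equal to the rank of $G$, namely $n$; and using the concrete description of the cone of an inductive limit from Section \ref{PrelimSec} (the positive cone of the limit is the union of the images of the positive cones of the stages), together with the corresponding properties of $(G,G^+)$ and clearing of denominators, one checks that (OV2), (OV3) and directedness hold for $(V,V^+)$.

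The one step that will take a moment of care --- and which I expect to be the main, if mild, obstacle --- is the strict-order axiom (OV1), $V^+ \cap (-V^+) = 0$. An element of this set is of the form $\phi_k^\infty(g) = -\phi_j^\infty(h)$ with $g,h \in G^+$ and $\phi_k^\infty, \phi_j^\infty \colon G \to V$ the canonical maps; pushing both sides to a common later stage turns this into an honest equation $N g = -M h$ in $G$ with $N,M$ positive integers, so $Ng \in G^+ \cap (-G^+) = 0$, and since $G$ is free (hence torsion-free) this gives $g = 0$ and so the element is $0$. (No unperforation hypothesis on $G$ is needed.) With (OV1) in hand, $(V,V^+)$ meets the hypotheses of Corollary \ref{QSimplicial}, and applying that corollary completes the proof.
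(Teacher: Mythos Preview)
Your proposal is correct and follows exactly the paper's approach: reduce to Corollary~\ref{QSimplicial} by observing that $(G,G^+)\tens_\Z(\Q,\Q_{\geq 0})$ is an $n$-dimensional ordered directed $\Q$-vector space with Riesz interpolation. The paper's proof is a single sentence that takes these properties of the ordered tensor product as known (from the theory developed in \cite{GoodearlHandelman:tens}), whereas you spell out a verification via the inductive-limit description; the extra detail is fine but not required.
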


\begin{proof}
This follows immediately, as $(G,G^+) \tens_{\Z} (\Q,\Q_{\geq 0})$ is an $n$-dimensional ordered directed $\Q$-vector space with Riesz interpolation.
\end{proof}

\end{document}